\documentclass[11pt,a4paper]{article}
\usepackage{amsmath,amssymb,amsthm,mathrsfs,fullpage,bbm}
\usepackage{mathtools,microtype}
\allowdisplaybreaks
\usepackage[colorlinks=true,linkcolor=blue,citecolor=blue,urlcolor=blue]{hyperref}

\newtheorem{theorem}{Theorem}[section]
\newtheorem{lemma}[theorem]{Lemma}
\theoremstyle{remark}

\numberwithin{equation}{section}

\newcommand{\A}{\mathcal{A}}
\newcommand{\E}{\mathbb{E}}
\newcommand{\W}{\mathrm{W}}
\newcommand\tup[1]{\left\langle #1 \right\rangle}

\title{Quantitative bounds for regular $3$-wise intersecting families}
\author{Fan Chang\thanks{School of Statistics and Data Science, Nankai University, Tianjin, China; and Extremal Combinatorics and Probability Group, Institute for Basic Science, Daejeon, South Korea. Email: \texttt{1120230060@mail.nankai.edu.cn}. Supported by the National Natural Science Foundation of China (NSFC) under grant 124B2019 and by the Institute for Basic Science (IBS-R029-C4).}
}
\date{}

\begin{document}
\maketitle

\begin{abstract}
Frankston, Kahn and Narayanan proved that every regular increasing 3-wise intersecting family of subsets of $[n]$ has cardinality $o(2^n)$ using Friedgut’s junta theorem. We give a short quantitative proof using elementary tools from the analysis of Boolean functions and entropy. More precisely, if $\A\subseteq\mathcal P_n$ is a nonempty 3-wise intersecting family that is both regular and increasing, then
$$
\log\frac{2^n}{|\mathcal{A}|}\ge
\frac{n}{2}\left(\frac{|\mathcal{A}|}{2^n-|\mathcal{A}|}\right)^2,
$$
and consequently $|\mathcal{A}|\le 2^n\sqrt{\W(n)/n}$, where $\W$ is the principal Lambert function defined by $\W(x)e^{\W(x)}=x$ for $x\ge0$. We also give a purely Fourier-analytic proof of the weaker estimate 
$$
|\mathcal{A}|\le \frac{2^n}{1+n^{1/3}}.
$$
\end{abstract}

\section{Introduction}

For a positive integer $n$, let $[n]=\{1,2,\ldots,n\}$ and let $\mathcal P_n$ denote the power-set of $[n]$. For an integer $r\ge2$, a family $\A\subseteq\mathcal P_n$ is said to be \emph{$r$-wise intersecting} if any $r$ of the sets in $\A$ have nonempty intersection. We say that $\A$ is \emph{increasing} if it is closed under taking supersets, \emph{regular} if every element of $[n]$ belongs to the same number of members of $\A$, and \emph{symmetric} if its automorphism group is transitive on $[n]$.

The distinction between $2$-wise and $3$-wise intersection is already apparent in the symmetric setting. When $n$ is odd, the family $\{A\subseteq[n]:|A|>n/2\}$ is a symmetric intersecting family of size $2^{n-1}$, which is the largest possible size of an intersecting subfamily of $\mathcal{P}_n$. For $3$-wise intersection, however, Frankl~\cite{F1981} conjectured that every symmetric $3$-wise intersecting family has cardinality $o(2^n)$. Cameron, Frankl and Kantor~\cite{CFK1989} had earlier obtained a substantially stronger estimate in the $4$-wise setting. Frankl's conjecture was eventually proved by Ellis and Narayanan~\cite{EN2017}, who obtained the quantitative bound $|\A|\le 2^n/n^c$ for some universal constant $c>0$ by combining the $p$-biased measure with the Friedgut--Kalai sharp-threshold theorem~\cite{FK1996}. A construction of Riordan, recorded in~\cite{EN2017}, gives symmetric $3$-wise intersecting families satisfying
$$
\log_2|\A|=n-2\sqrt n+o(\sqrt n)
$$
for infinitely many $n$, and therefore they conjectured that every symmetric 3-wise intersecting family $\A\subseteq\mathcal P_n$ satisfies
$$
\log_2|\A|\le n-cn^\delta
$$
for some universal constants $c,\delta>0$ and one cannot take $\delta >1/2$ in such a result.

Symmetry is considerably stronger than regularity, and Frankl~\cite{F1981} gave a projective-geometric construction of regular $3$-wise intersecting families containing a positive proportion of all subsets of the ground set, so regularity alone cannot imply an $o(2^n)$ bound. Frankston, Kahn and Narayanan~\cite{FKN2018} showed that every regular increasing $3$-wise intersecting family has cardinality $o(2^n)$, using Friedgut's junta theorem~\cite{F1998} to establish the required threshold behaviour. The quantitative estimate obtained by their argument is, however, very weak, and they raised the corresponding problem for regular increasing families. Analogous questions for vector-intersecting families were considered by Eberhard, Kahn, Narayanan and Spirkl \cite{EKNS2021}, while the more recent theory of global functions has yielded effective quantitative bounds for intersecting families under weaker ``smearedness'' assumptions \cite{KLM2025JEMS}.

This short note aims to give a direct quantitative proof of the theorem of Frankston, Kahn and Narayanan~\cite{FKN2018}. Throughout the paper, $\log$ denotes the natural logarithm. Our main result is the following.

\begin{theorem}\label{thm:main}
If $\A\subseteq\mathcal P_n$ is a nonempty 3-wise intersecting family that is both regular and increasing, then
$$
\log\frac{2^n}{|\A|}\ge\frac{n}{2}\left(\frac{|\A|}{2^n-|\A|}\right)^2.
$$
In particular,
$$
|\A|\le2^n\sqrt{\frac{\W(n)}n},
$$
where $\W$ is the principal branch of the Lambert $W$-function defined by $\W(x)e^{\W(x)}=x$ for $x\ge0$.
\end{theorem}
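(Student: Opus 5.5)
The plan is to reduce the statement to a lower bound on the average size of a member of $\A$, via an entropy (Pinsker) argument, and then to prove that lower bound using regularity, monotonicity and the $3$-wise intersecting property. Write $\mu=|\A|/2^n$. Let $X$ be uniform on $\A$ and $p_i=\Pr[i\in X]$. Then
$$
\log\frac{2^n}{|\A|}=n\log 2-H(X)\ \ge\ \sum_{i=1}^n\bigl(\log 2-h(p_i)\bigr)=\sum_{i=1}^n D\bigl(\mathrm{Ber}(p_i)\,\|\,\mathrm{Ber}(1/2)\bigr)\ \ge\ 2\sum_{i=1}^n (p_i-\tfrac12)^2 ,
$$
using subadditivity of entropy and then Pinsker's inequality in natural-log form. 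Regularity gives $p_i=p$ for all $i$. Hence the first inequality of Theorem~\ref{thm:main} follows once we show
$$
p-\frac12\ \ge\ \frac{\mu}{2(1-\mu)},\qquad\text{equivalently}\qquad \E_{x\in\A}|x|\ \ge\ \frac{n}{2(1-\mu)} .
$$

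Next, I would translate this into influences. Let $f=1_\A$ and let $x$ be uniform in $\mathcal P_n$. Since $f$ is increasing, $\E[f(x)(2\cdot 1_{i\in x}-1)]=\tfrac12\mathrm{Inf}_i[f]$. Summing over $i$ gives $\E_{x\in\A}(2|x|-n)=I[f]/(2\mu)$. By regularity all influences are equal, so the target becomes the single-coordinate inequality
$$
\mathrm{Inf}_i[f]\ \ge\ \frac{2\mu^2}{1-\mu}\qquad\text{for every } i .
$$
This is the main obstacle, and it is where the $3$-wise intersecting hypothesis must enter. The key consequence I would use is the following: for $A,B\in\A$ the set $A\cap B$ meets every member of $\A$. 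Since $\A$ is increasing, this means $(A\cap B)^c\notin\A$, i.e.\ $A^c\cup B^c\notin\A$.

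My first attempt at the obstacle is a counting/coupling argument. Take $x,y$ independent and uniform in $\A$. The set $z=x^c\cup y^c$ lies outside $\A$. On the other hand, $z$ is typically a "large" set, since it contains each element with probability $1-p^2$. I would then walk from $z$ upwards along a monotone path towards $[n]\in\A$, adding the missing coordinates in random order. Each such walk crosses the boundary of $\A$ at some pivotal edge, and that edge contributes to some $\mathrm{Inf}_i$. Averaging over $i$ (legitimate by regularity) and comparing the distribution of the crossing point with the uniform measure should yield a bound of the form $\mathrm{Inf}_i\gtrsim \mu\cdot\Pr[\text{a random set lies outside }\A]^{-1}\cdot\mu$. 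This would give the factor $\mu^2/(1-\mu)$. If the path argument is too lossy, I would instead write the condition "$A^c\cup B^c\notin\A$" as the Fourier identity $\E[f(x)f(y)f((x\cap y)^c)]=0$ for independent uniform $x,y$. I would then expand in Fourier and isolate the level-$0$ and level-$1$ terms, where regularity forces $\hat f(\{i\})=\mathrm{Inf}_i/2$ to be constant. The remaining terms would be controlled by positivity/monotonicity.

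Finally, the "in particular" part is elementary. With $t=\mu/(1-\mu)\ge\mu$, we have $\log(1/\mu)\ge \tfrac n2 t^2\ge \tfrac n2\mu^2$. Setting $s=n\mu^2$, this reads $\log(n/s)\ge s$, i.e.\ $se^{s}\le n$. Since $se^s$ is increasing, this gives $s\le \W(n)$, hence $\mu\le\sqrt{\W(n)/n}$.
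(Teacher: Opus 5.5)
Your reductions are correct and match the paper's. Subadditivity plus Pinsker gives $\log(2^n/|\A|)\ge 2n(p-\frac12)^2$. Monotonicity and regularity give $\mathrm{Inf}_i[f]=2\mu(2p-1)$ for every $i$, so everything reduces to $\mathrm{Inf}_i[f]\ge 2\mu^2/(1-\mu)$. Your Lambert-$\W$ step is also the paper's. But that influence bound is the heart of the theorem, and neither of your two routes proves it.

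In the path/coupling sketch, $\mathrm{Inf}_i[f]$ is the \emph{uniform} measure of the boundary edges in direction $i$. Your walks, however, cross the boundary according to a law fixed by the complicated distribution of $z=x^c\cup y^c$ and the random ordering. To turn ``every walk crosses somewhere'' into a lower bound on $\mathrm{Inf}_i[f]$, you would need an upper bound on the density of the crossing-edge law relative to the uniform edge measure. Nothing in the sketch supplies one, and a $\gtrsim$ bound cannot give the explicit constant $\frac n2$ in the statement anyway.

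The Fourier route fails as described because $(x,y)\mapsto(x\cap y)^c$ is not a group operation. When you expand $\E[f(x)f(y)f((x\cap y)^c)]$, every triple $(R,S,T)$ with $R,S\subseteq T$ survives, with weight $\pm2^{-|T|}$. So the identity does not diagonalize, and there is no positivity that lets you discard the terms with $|T|\ge2$.

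The missing idea is to use a weaker consequence of the hypothesis that \emph{does} linearize: $\A$ is sum-free in $\mathbb F_2^n$. This holds because $x\cap y\cap(x+y)=\varnothing$; given monotonicity it also follows from your observation, since $x+y\subseteq(x\cap y)^c$. For the configuration $(x,y,x+y)$ the expansion collapses to $\E[f(x)f(y)f(x+y)]=\sum_S\hat f(S)^3=0$. Next, for $i\in S$,
\[
|\hat f(S)|=\tfrac12\bigl|\E_z[\chi_{S\setminus\{i\}}(z)(f(z^{i=1})-f(z^{i=0}))]\bigr|\le\tfrac12\mathrm{Inf}_i[f],
\]
and by regularity this bound is the same for every $i$. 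Together with Parseval,
\[
\mu^3=-\sum_{S\ne\varnothing}\hat f(S)^3\le\Bigl(\max_{S\ne\varnothing}|\hat f(S)|\Bigr)\sum_{S\ne\varnothing}\hat f(S)^2\le\frac{\mathrm{Inf}_i[f]}{2}\,\mu(1-\mu),
\]
which is exactly $\mathrm{Inf}_i[f]\ge2\mu^2/(1-\mu)$. With this step inserted, your argument becomes the paper's proof.
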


Since $\W(n)\le\log n$ for $n\ge3$, Theorem~\ref{thm:main} gives
$$
\log_2|\A|\le n-\frac12\log_2n+\frac12\log_2\log n.
$$

Using only basic tools from the analysis of Boolean functions, we can prove the following weaker estimate.

\begin{theorem}\label{thm:fourier}
If $\A\subseteq\mathcal P_n$ is a 3-wise intersecting family that is both regular and increasing, then
$$
|\A|\le\frac{2^n}{1+n^{1/3}}.
$$
\end{theorem}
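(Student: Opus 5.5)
The plan is to turn the $3$-wise intersecting condition into a pointwise Fourier identity. Then regularity will let the level-one Fourier weight contribute a factor growing with $n$.

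\textbf{Reformulation.} Write $f=\mathbbm 1_\A$, viewed as a function on $\{0,1\}^n$, and set $\mu=|\A|/2^n$. Because $\A$ is increasing, the condition ``$A\cap B\cap C\neq\emptyset$ for all $A,B,C\in\A$'' is equivalent to the following statement. For all $A,B\in\A$, the set $[n]\setminus(A\cap B)$ does not lie in $\A$. Let $\mathcal U=\{S:[n]\setminus S\notin\A\}$ and $u=\mathbbm 1_{\mathcal U}$. Then $\mathcal U$ is again an increasing family, it is regular because complementation preserves regularity, and it has measure $1-\mu$. The condition becomes the pointwise identity
\[
f(x)f(y)\bigl(1-u(x\wedge y)\bigr)=0 .
\]
Averaging over independent uniform $x,y$ gives
\[
\mu^2=\E\bigl[f(x)f(y)u(x\wedge y)\bigr].
\]

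\textbf{Fourier expansion.} I will expand both sides in the Fourier--Walsh basis $\chi_S(x)=(-1)^{\sum_{i\in S}x_i}$. For a single coordinate, the triple $(x_i,y_i,x_i\wedge y_i)$ has an explicit joint law. Hence $\E[\chi_S(x)\chi_T(y)\chi_R(x\wedge y)]$ factorises over coordinates. It vanishes unless $S\cup T\subseteq R$ or a similar support condition holds, and otherwise it equals an explicit product of constants $\pm\tfrac12$. The identity then reads
\[
\mu^2=\mu^2(1-\mu)+(\text{level-one terms})+(\text{higher-order terms}).
\]
Regularity forces $\hat f(\{i\})=-\alpha$ and $\hat u(\{i\})=-\beta$ for all $i$, where $\alpha,\beta>0$ are half the common influences, by monotonicity. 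The level-one contribution is then a sum over $i$ of products such as $\alpha\beta$ and $\alpha^2$, so it is of order $n\alpha\beta$. It carries a definite sign coming from the fact that $x\wedge y$ is positively correlated with both $x$ and $y$.

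\textbf{Bounding the terms.} I will bound the higher-order terms by Cauchy--Schwarz and Parseval, using that the coordinatewise correlation factor is at most $1/2$ in absolute value. Each tail is then bounded by a product of norms involving $\mu(1-\mu)$, with a damping factor $2^{-|R|}$. The level-one coefficients satisfy $n\alpha^2\le\mu(1-\mu)$, so $\alpha\le\sqrt{\mu/n}$, and similarly $\beta\le\sqrt{\mu/n}$. Comparing the constant term $\mu^2\cdot\mu$ with these bounds should produce an inequality of the form $\mu^3\lesssim n\alpha\beta+(\text{tail})$. Rewriting this in terms of $\mu/(1-\mu)$ should give $n(\mu/(1-\mu))^3\le 1$, which is exactly $\mu\le 1/(1+n^{1/3})$. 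The exponent $1/3$ comes from balancing one power of $\mu$ from the identity against $\alpha^2\le\mu/n$.

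\textbf{Main obstacle.} The hardest step will be the previous one: isolating the level-one term and showing that the higher-order terms cannot cancel the constant term $\mu^3$. Naive Efron--Stein and Poincaré bounds do not grow with $n$, so the $n$-dependence must come from the constraint $n\alpha^2\le\mu(1-\mu)$, which says that the equal influences are individually small. My first attempt will be to apply the same identity with the roles of $f$ and $u$ interchanged, combining it with $u=1-f(\neg\,\cdot)$, so that level-one terms appear squared and with a definite sign. If the tail estimate is too lossy, I will instead pass to $p$-biased measures. There, $\mu^2\le\mu_{1/4}(\mathcal U)$ together with Russo--Margulis integrated from $p=1/4$ to $p=1/2$ should give a decrement of order $\mu^3$, matching the target.
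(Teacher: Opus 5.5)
Your reformulation $\mu^2=\E[f(x)f(y)u(x\wedge y)]$ is correct, and so is the coordinatewise computation: $\E[\chi_S(x)\chi_T(y)\chi_R(x\wedge y)]$ equals $2^{-|R|}(-1)^{|S\cap T|}$ if $S\cup T\subseteq R$ and $0$ otherwise. But the step you flag as the main obstacle is the entire proof, and the proposal does not carry it out.

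First, the inequality you aim for has the wrong shape. Since $\mathrm{Inf}_i[u]=\mathrm{Inf}_i[f]$, we have $\beta=\alpha$, and $n\alpha^2\le\mu(1-\mu)$. So even with zero tail, $\mu^3\lesssim n\alpha\beta$ only yields $\mu^2\lesssim 1-\mu$, with no dependence on $n$. To get the exponent $1/3$ you need $\mu^3\lesssim\alpha\,\mu(1-\mu)$, with a \emph{single} factor $\alpha$ rather than a sum of $n$ level-one terms.

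Second, the level-one terms do not have a helpful sign. The $R=\{i\}$ term equals $-\tfrac{\beta}{2}(\mu^2-2\mu\alpha-\alpha^2)$. This is negative once $\mu>(1+\sqrt2)\alpha$, in particular throughout the range $\mu\approx n^{-1/3}$, where $\alpha\le\sqrt{\mu/n}$. So the surplus $\mu^3$ must come entirely from the higher-order part.

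Third, that higher-order part is not amenable to Cauchy--Schwarz and Parseval, because the expansion is not diagonal. For each $R$, the coefficient of $\hat u(R)$ is $2^{-|R|}\sum_{S,T\subseteq R}(-1)^{|S\cap T|}\hat f(S)\hat f(T)$. This is a quadratic form whose matrix is a Hadamard matrix of norm $2^{|R|/2}$, so the effective damping is only $2^{-|R|/2}$. The resulting tail estimate is of order $\mu\sqrt{1-\mu}\,(3/2)^{n/2}$.

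The $p$-biased fallback has a structural problem too. Regularity equalises influences only at $p=1/2$. So Russo--Margulis gives no lower bound on $\int_{1/4}^{1/2}\mathrm{I}^{(p)}[\mathcal U]\,dp$ without a sharp-threshold input such as the junta theorem used by Frankston, Kahn and Narayanan. For a contradiction you would need this integral to exceed $1-\mu-\mu^2$, not merely to be of order $\mu^3$.

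The missing idea is to replace $x\wedge y$ by $x\oplus y$. For any $x,y$, the sets $x$, $y$, $x\oplus y$ have empty common intersection, since a coordinate of $x\cap y$ is absent from $x\oplus y$. Hence a $3$-wise intersecting family is sum-free in $\mathbb F_2^n$, and the resulting identity is diagonal:
\[
0=\E_{x,y}[f(x)f(y)f(x+y)]=\sum_{S}\hat f(S)^3,\qquad\text{so}\qquad \mu^3=-\sum_{S\ne\varnothing}\hat f(S)^3\le\max_{S\ne\varnothing}|\hat f(S)|\cdot\mu(1-\mu).
\]
The second ingredient is a bound on \emph{all} nonconstant coefficients, not just level one. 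For $i\in S$,
\[
|\hat f(S)|=\tfrac12\bigl|\E_z[\chi_{S\setminus\{i\}}(z)(f(z^{i=1})-f(z^{i=0}))]\bigr|\le\tfrac12\mathrm{Inf}_i[f]=\alpha,
\]
where the last equality uses monotonicity and regularity. This is exactly the inequality $\mu^3\le\alpha\,\mu(1-\mu)$ you need. Combining it with $n\alpha^2=\sum_i\hat f(\{i\})^2\le\mu(1-\mu)$ gives $n\mu^3\le(1-\mu)^3$, that is, $\mu\le 1/(1+n^{1/3})$.
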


Both theorems also hold for symmetric $3$-wise intersecting families. Indeed, the upward closure of such a family is $3$-wise intersecting, regular and increasing.

Let us briefly describe the proofs. We identify $\mathcal P_n$ with $\{0,1\}^n$ and write $f=\mathbbm{1}_{\A}$ and $\alpha=\E[f]$. The combinatorial hypothesis is converted into spectral information in two steps: 3-wise intersection implies that $\A$ is sum-free in $\mathbb F_2^n$, and sum-freeness gives the cubic identity
\[
\sum_{S\subseteq[n]}\hat{f}(S)^3=0.
\]
On the other hand, regularity and monotonicity imply that all coordinate influences are equal, while every nonconstant Fourier coefficient is bounded by half of the relevant influence. Combining these facts with Parseval's identity gives
\[
{\rm I}[f]\ge \frac{2n\alpha^2}{1-\alpha} \ge 2n\alpha^2.
\]
A second use of Parseval gives ${\rm I}[f]\le2\sqrt{n\alpha(1-\alpha)}$, and Theorem~\ref{thm:fourier} follows. For Theorem~\ref{thm:main}, the same lower bound on ${\rm I}[f]$ forces a bias in every coordinate of a uniformly random member of $\A$; subadditivity of entropy then turns this common bias into the stated estimate.

\medskip\noindent\emph{Organization.} Section~\ref{sec:prelim} collects the elementary facts from the analysis of Boolean functions and entropy. Theorem~\ref{thm:fourier} is proved in Section~\ref{sec:fourier}, and Therorem~\ref{thm:main} in Section~\ref{sec:entropy}. We conclude in Section~\ref{sec:remark} with a brief discussion of the limitation of the method.

\section{Preliminaries}\label{sec:prelim}

In this section, we briefly describe the notions and tools we shall require for our arguments. We identify $\mathcal P_n$ with the discrete cube $\{0,1\}^n$, and, when convenient, with the group $\mathbb F_2^n$ under coordinatewise addition modulo $2$.

We consider real-valued functions $f:\{0,1\}^n \to \mathbb{R}$, equipped with the inner product $\tup{f,g}=\mathbb{E}_x[f(x)g(x)]$. For $S\subseteq[n]$, define the Fourier--Walsh character $\chi_S(x):=(-1)^{\sum_{i\in S}x_i}$. The family $
\{\chi_S\}_{S\subseteq[n]}$ is an orthonormal basis of $L^2(\{0,1\}^n)$. The Fourier--Walsh expansion of $f$ is given by $f(x)=\sum_{S\subseteq[n]}\hat{f}(S)\chi_S(x)$, where $\hat{f}(S)=\tup{f,\chi_S}$.
We shall use Parseval's identity $\sum_{S\subseteq[n]}\hat{f}(S)^2=\E[f^2]$. We refer the reader to~\cite{R2014} for the standard background on Boolean Fourier analysis.

For $i\in[n]$, let $e_i$ denote the $i$th standard basis vector. If $f:\{0,1\}^n\to\{0,1\}$, define the \emph{$i$th influence} and the \emph{total influence} by
$$
{\rm Inf}_i[f]=\Pr_x\bigl(f(x)\ne f(x \oplus e_i)\bigr),
\qquad
{\rm I}[f]=\sum_{i=1}^n {\rm Inf}_i[f].
$$
For $x\in\{0,1\}^{[n]\setminus\{i\}}$ and $a\in\{0,1\}$, we write $x^{i=a}$ for the point obtained by setting the $i$th coordinate equal to $a$. Then
$$
{\rm Inf}_i[f]=\E_{x\in\{0,1\}^{[n]\setminus\{i\}}}\left[
\left|f(x^{i=1})-f(x^{i=0})\right|\right].
$$
We call $f$ increasing or regular when its support has the corresponding property as a family of subsets of $[n]$.

A set $\A\subseteq\mathbb F_2^n$ is \emph{sum-free} if $x,y\in\A$ implies $x+y\notin\A$.

\begin{lemma}\label{lem:cubic}
If $\A\subseteq\mathbb F_2^n$ is sum-free and $f=\mathbbm{1}_{\A}$, then
\[
\sum_{S\subseteq[n]}\hat{f}(S)^3=0.
\]
\end{lemma}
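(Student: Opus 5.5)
The plan is to express $\sum_S \hat f(S)^3$ as a normalized count of additive triples inside $\A$ and then use sum-freeness to show that this count is zero. The link between the two is the convolution identity on $\mathbb F_2^n$: with $(f*g)(x)=\E_y[f(y)g(x+y)]$, one has $\widehat{f*g}(S)=\hat f(S)\hat g(S)$.

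First, expand each coefficient as $\hat f(S)=\E_x[f(x)\chi_S(x)]$ and write the cube as a triple expectation:
\[
\sum_{S}\hat f(S)^3=\E_{x,y,z}\Bigl[f(x)f(y)f(z)\sum_S\chi_S(x+y+z)\Bigr].
\]
This uses $\chi_S(x)\chi_S(y)\chi_S(z)=\chi_S(x+y+z)$, which holds because characters are multiplicative over $\mathbb F_2^n$.

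Second, apply the orthogonality relation $\sum_S\chi_S(w)=2^n\mathbbm 1[w=0]$. Substituting this, the sum becomes
\[
\sum_{S}\hat f(S)^3=2^{-2n}\,\#\{(x,y,z)\in\A^3: x+y+z=0\}=2^{-2n}\,\#\{(x,y)\in\A^2: x+y\in\A\},
\]
where the second equality uses $z=x+y$, since every element is its own additive inverse in $\mathbb F_2^n$. Alternatively, one can reach the same expression as $\E_x[f(x)(f*f)(x)]$ combined with Parseval and the convolution identity.

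Third, invoke the hypothesis. Because $\A$ is sum-free, no pair $(x,y)\in\A^2$ has $x+y\in\A$, so the count is zero and the identity follows.

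The only real obstacle is the degenerate pair $x=y$. In that case $x+y=0$, so the definition must be read as excluding $0\in\A$. The definition as stated, applied with $x=y$, already forces $0\notin\A$ for nonempty $\A$, so the triples with $x=y$ contribute nothing. If instead one adopted the convention that only distinct $x,y$ are considered, the conclusion would require $0\notin\A$ as a separate check. In the intended application this holds anyway, because the empty set cannot belong to a nonempty $3$-wise intersecting family.
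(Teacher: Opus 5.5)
Your proposal is correct and is essentially the paper's argument: both rest on the identity $\sum_S\hat f(S)^3=\E_{x,y}[f(x)f(y)f(x+y)]$, which follows from multiplicativity of characters and orthogonality, and sum-freeness makes the right-hand side vanish; you only run the computation in the opposite direction, from the Fourier side to the count of additive triples, whereas the paper expands $\E_{x,y}[f(x)f(y)f(x+y)]$ into Fourier coefficients. Your remark on the case $x=y$ also matches the paper's definition of sum-free, which does not require $x\ne y$ and therefore already excludes $0\in\A$.
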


\begin{proof}
Sum-freeness gives $f(x)f(y)f(x+y)=0$ for all $x,y\in\mathbb F_2^n$. Hence
\[
0=\E_{x,y}[f(x)f(y)f(x+y)].
\]
Expanding each $f$ in Fourier expansions and using $\chi_S(x+y)=\chi_S(x)\chi_S(y)$, we obtain
$$
0=\sum_{R,S,T\subseteq[n]}
\hat{f}(R)\hat{f}(S)\hat{f}(T)\E_x[\chi_R(x)\chi_T(x)]\E_y[\chi_S(y)\chi_T(y)]=\sum_{S\subseteq[n]}\hat{f}(S)^3,
$$
by orthogonality.
\end{proof}

\begin{lemma}\label{lem:regular-influences}
If $f:\{0,1\}^n\to\{0,1\}$ is regular and increasing, then ${\rm Inf}_i[f]=\frac{{\rm I}[f]}n$ for every $i\in[n]$.
\end{lemma}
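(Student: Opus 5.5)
For an increasing $f$, the $i$th influence can be written as a difference of counts. Monotonicity gives $f(x^{i=1})\ge f(x^{i=0})$ for every $x\in\{0,1\}^{[n]\setminus\{i\}}$, so the absolute value in the formula for ${\rm Inf}_i[f]$ can be dropped:
$$
{\rm Inf}_i[f]=\E_{x}\bigl[f(x^{i=1})-f(x^{i=0})\bigr].
$$
The plan is to express this difference through the degree of $i$ in $\A$, namely $d_i:=|\{A\in\A: i\in A\}|$. Regularity says $d_i=d$ does not depend on $i$.

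The computation goes as follows. The sets $A\in\A$ with $i\in A$ correspond bijectively to the points $x\in\{0,1\}^{[n]\setminus\{i\}}$ with $f(x^{i=1})=1$, so
$$
\sum_x f(x^{i=1})=d_i .
$$
Likewise, the members of $\A$ not containing $i$ correspond to the points $x$ with $f(x^{i=0})=1$, so
$$
\sum_x f(x^{i=0})=|\A|-d_i .
$$
Dividing by $2^{n-1}$ gives
$$
{\rm Inf}_i[f]=\frac{d_i-(|\A|-d_i)}{2^{n-1}}=\frac{2d_i-|\A|}{2^{n-1}} .
$$
Under regularity this equals $(2d-|\A|)/2^{n-1}$ for every $i$. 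So all influences coincide, and summing over $i$ gives ${\rm I}[f]=n\,{\rm Inf}_i[f]$, which is the claim.

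There is no real obstacle here. The only step that needs care is using monotonicity to remove the absolute value, and the bookkeeping of the $2^{n-1}$ normalisation.
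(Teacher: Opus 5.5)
Your proof is correct and is essentially the paper's argument. Monotonicity removes the absolute value; the paper phrases this as $\A_i^0\subseteq\A_i^1$. Counting then gives ${\rm Inf}_i[f]=(2d_i-|\A|)/2^{n-1}$, where your $d_i$ is the paper's $|\A_i^1|$, which is independent of $i$ by regularity.
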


\begin{proof}
Let $\A$ be the support of $f$, and for each $i\in[n]$ define
\[
\A_i^0=\{x\subseteq[n]\setminus\{i\}:x\in\A\},
\qquad
\A_i^1=\{x\subseteq[n]\setminus\{i\}:x\cup\{i\}\in\A\}.
\]
Since $\A$ is increasing, $\A_i^0\subseteq\A_i^1$, and therefore
\[
{\rm Inf}_i[f]=\frac{|\A_i^1\setminus\A_i^0|}{2^{n-1}}=\frac{2|\A_i^1|-|\A|}{2^{n-1}}.
\]
Regularity says that $|\A_i^1|$ is independent of $i$. Thus all the influences are equal, and summing them gives the claim.
\end{proof}

\begin{lemma}\label{lem:coefficient-influence}
Let $f:\{0,1\}^n\to\{0,1\}$ be increasing. If $i\in S$, then
$$
|\hat{f}(S)|\le \frac12 {\rm Inf}_i[f].
$$
In particular, $\hat{f}(\{i\})=
-\frac12 {\rm Inf}_i[f]$.
\end{lemma}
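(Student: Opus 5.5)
We prove Lemma~\ref{lem:coefficient-influence} by conditioning on the $i$th coordinate. Write each $x\in\{0,1\}^n$ as $x=(y,x_i)$ with $y\in\{0,1\}^{[n]\setminus\{i\}}$, and put $S'=S\setminus\{i\}$. Since $i\in S$, we have $\chi_S(x)=(-1)^{x_i}\chi_{S'}(y)$. Averaging over $x_i$ first gives
\[
\hat f(S)=\E_x[f(x)\chi_S(x)]=\frac12\,\E_y\Bigl[\bigl(f(y^{i=0})-f(y^{i=1})\bigr)\chi_{S'}(y)\Bigr].
\]
This identity holds for every $f$; monotonicity has not been used yet.

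Next we use that $f$ is increasing. Because $y^{i=0}\subseteq y^{i=1}$ as sets, we have $f(y^{i=1})\ge f(y^{i=0})$. Hence
\[
f(y^{i=0})-f(y^{i=1})=-\bigl|f(y^{i=1})-f(y^{i=0})\bigr|.
\]
Substituting this and using $|\chi_{S'}(y)|=1$ together with the triangle inequality gives
\[
|\hat f(S)|\le\frac12\E_y\bigl|f(y^{i=1})-f(y^{i=0})\bigr|=\frac12{\rm Inf}_i[f].
\]
The last equality is the expression for ${\rm Inf}_i[f]$ recorded in the preliminaries.

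For the special case $S=\{i\}$ we have $S'=\emptyset$, so $\chi_{S'}\equiv1$. No inequality is lost, and the identity above becomes exactly
\[
\hat f(\{i\})=-\tfrac12\E_y\bigl|f(y^{i=1})-f(y^{i=0})\bigr|=-\tfrac12{\rm Inf}_i[f].
\]

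There is no real obstacle here. The only point needing care is the sign convention: $\chi_{\{i\}}(x)=(-1)^{x_i}$ equals $+1$ when $i$ is absent from the set. This is why $\hat f(\{i\})$ comes out negative for an increasing $f$.
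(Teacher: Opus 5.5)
Your proof is correct and is essentially the paper's argument. Both condition on $x_i$, write $\hat f(S)$ as half the average of $\chi_{S\setminus\{i\}}$ against the difference $f(y^{i=1})-f(y^{i=0})$, apply the triangle inequality, and use monotonicity to fix the sign when $S=\{i\}$. The only minor difference is that you invoke monotonicity before the triangle inequality, where it is not needed: in the paper, the bound $|\hat f(S)|\le\frac12{\rm Inf}_i[f]$ holds for every Boolean $f$, and monotonicity is used only to evaluate $\hat f(\{i\})$ exactly.
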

\begin{proof}
Write $x=(x_i,z)\in\{0,1\}^n$ with $z\in\{0,1\}^{[n]\setminus\{i\}}$. Note that
\begin{equation}
    \begin{split}
  |\hat{f}(S)|&=\left|\mathbb{E}_{x\in\{0,1\}^n}[f(x)\chi_{S\setminus\{i\}}(z)(-1)^{x_i}]\right|\\
  &=\frac{1}{2}\left|\E_{z\in\{0,1\}^{[n]\setminus\{i\}}}\left[\chi_{S\setminus\{i\}}(z)\bigl(f(z^{i=1})-f(z^{i=0})\bigr)\right]\right|\\
  &\le\frac{1}{2}\E_{z\in\{0,1\}^{[n]\setminus\{i\}}}\left[\left|f(z^{i=1})-f(z^{i=0})\right|\right]=\frac12 {\rm Inf}_i[f]
    \end{split}
\end{equation}
When $S=\{i\}$, monotonicity gives
$f(z^{i=1})-f(z^{i=0})\ge0$, and therefore $\hat{f}(\{i\})=
-\frac12 {\rm Inf}_i[f]$.
\end{proof}

We briefly recall the elementary facts about entropy that will be used below; see, for example,~\cite[Section~14.6]{AS2000}. Let $X$ be a discrete random variable with finite support $\Omega$, and write $p_X(x)=\Pr(X=x)$. The \emph{Shannon entropy} of $X$ is defined by
$$
H(X)=-\sum_{x\in\Omega}p_X(x)\log p_X(x).
$$
For random variables $X_1,\ldots,X_n$, we write $H(X_1,\ldots,X_n)$ for the entropy of the joint random variable $(X_1,\ldots,X_n)$. In particular, if $X$ is uniformly distributed on a finite set $\Omega$, then $H(X)=\log|\Omega|$.

A Bernoulli random variable with parameter $t$ has entropy $h(t)=
-t\log t-(1-t)\log(1-t)$ for $0\le t\le1$, where, as usual, $0\log0=0$. We shall use the standard subadditivity inequality
$$
H(X_1,\ldots,X_n)\le\sum_{i=1}^n H(X_i).
$$

The following immediate consequence will be used in the proof of Theorem~\ref{thm:main}.

\begin{lemma}\label{lem:entropy-regular}
Let $Z$ be uniformly distributed on a regular family $\A\subseteq\mathcal P_n$. Then there exists $\theta\in[0,1]$ such that $\Pr(i\in Z)=\theta$ for every $i\in[n]$, and
$$
\log|\A|\le nh(\theta).
$$
\end{lemma}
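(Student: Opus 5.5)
The plan is to read off $\theta$ directly from regularity and then apply subadditivity of entropy to the coordinates of $Z$. Write $Z=(Z_1,\ldots,Z_n)$, where $Z_i=\mathbbm{1}[i\in Z]$ is the $i$th coordinate of $Z$ viewed as a point of $\{0,1\}^n$.

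Regularity says there is an integer $d$ such that every $i\in[n]$ lies in exactly $d$ members of $\A$. Since $Z$ is uniform on $\A$, we get $\Pr(i\in Z)=d/|\A|$ for every $i$. Set $\theta=d/|\A|$; this lies in $[0,1]$ because $0\le d\le|\A|$. If $\A$ is empty, $\log|\A|$ is not defined and there is nothing to prove, so I assume $\A$ is nonempty, which is the only case used later. Each $Z_i$ is then Bernoulli with parameter $\theta$, so $H(Z_i)=h(\theta)$.

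Next, $Z$ is a function of $(Z_1,\ldots,Z_n)$ and conversely, so $H(Z)=H(Z_1,\ldots,Z_n)$. Because $Z$ is uniform on $\A$, we have $H(Z)=\log|\A|$. Subadditivity then gives
\[
\log|\A|=H(Z_1,\ldots,Z_n)\le\sum_{i=1}^n H(Z_i)=nh(\theta).
\]

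There is no real obstacle here. The only point needing care is that regularity refers to the same count $d$ for every $i$, so the marginal probabilities coincide.
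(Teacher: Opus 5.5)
Your proposal is correct and follows the paper's proof: regularity gives a common Bernoulli parameter $\theta$ for the coordinates $Z_i$, and subadditivity of entropy applied to $H(Z_1,\ldots,Z_n)=\log|\A|$ yields $\log|\A|\le nh(\theta)$. Your explicit choice $\theta=d/|\A|$ and the remark restricting to nonempty $\A$ are harmless additions that the paper leaves implicit.
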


\begin{proof}
Identify $Z=(Z_1,\ldots,Z_n)$ with $Z_i=\mathbbm{1}_{\{i\in Z\}}$. Since $\A$ is regular, each $Z_i$ is a Bernoulli random variable with the same parameter $\theta=\Pr(i\in Z)$. Then subadditivity gives
$$
\log|\A|=H(Z_1,\ldots,Z_n)\le\sum_{i=1}^n H(Z_i)=nh(\theta),
$$
as required.
\end{proof}

\begin{lemma}\label{lem:pinsker}
For every $0\le t\le1$,
$$
\log2-h(t)\ge 2\left(t-\frac12\right)^2.
$$
\end{lemma}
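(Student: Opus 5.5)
The plan is to study $g(t)=\log 2-h(t)-2\left(t-\frac12\right)^2$ on $[0,1]$ and show that it is nonnegative by locating its minimum.

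\emph{Setup.} Since $h(t)=h(1-t)$ and $\left(t-\frac12\right)^2$ is also symmetric about $t=\frac12$, the function $g$ satisfies $g(t)=g(1-t)$. It therefore suffices to treat $t\in[\frac12,1]$. At the midpoint we have $g(\frac12)=\log2-\log2-0=0$.

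\emph{Derivatives.} On the open interval $(0,1)$ we have $h'(t)=\log\frac{1-t}{t}$ and $h''(t)=-\frac{1}{t(1-t)}$. Hence
$$
g'(t)=\log\frac{t}{1-t}-4\left(t-\frac12\right),\qquad g''(t)=\frac{1}{t(1-t)}-4.
$$
By AM--GM, $t(1-t)\le\frac14$, so $g''(t)\ge0$ on $(0,1)$ and $g$ is convex there. Moreover $g'(\frac12)=0$. A convex function with a critical point at $\frac12$ attains its minimum there, so $g(t)\ge g(\frac12)=0$ for all $t\in(0,1)$.

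\emph{Endpoints.} The function $g$ is continuous on $[0,1]$ under the convention $0\log0=0$, so the inequality extends to $t\in\{0,1\}$. One can also check this directly: at $t=1$ it reads $\log2\ge\frac12$, which holds since $\log 2\approx0.693$.

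The only step needing any care is the endpoint behaviour. The derivative formulas are valid only on the open interval, and continuity handles the closed interval. This is a special case of Pinsker's inequality, namely the Kullback--Leibler divergence between Bernoulli$(t)$ and Bernoulli$(\frac12)$, so one could simply cite it instead.
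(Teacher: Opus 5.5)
Your proof is correct and is essentially the paper's argument: the paper uses the same $g$, notes $g(\tfrac12)=g'(\tfrac12)=0$ and $g''(t)=\frac{1}{t(1-t)}-4\ge0$, and concludes from convexity. Your symmetry reduction is never actually used, and your continuity argument for $t\in\{0,1\}$ supplies a detail the paper leaves implicit.
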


\begin{proof}
Set $g(t)=\log2-h(t)-2(t-\frac12)^2$. We have $g(\frac12)=g'(\frac12)=0$, while
\[
g''(t)=\frac1{t(1-t)}-4\ge0.
\]
Thus $g$ is convex and has its minimum at $\frac12$.
\end{proof}

\section{Proof of Theorem~\ref{thm:fourier}}\label{sec:fourier}

We first give the simple combinatorial observation that allows us to use Lemma~\ref{lem:cubic}.

\begin{lemma}\label{lem:threewise-sumfree}
If $\A\subseteq\mathcal P_n$ is 3-wise intersecting, then $\A$ is sum-free in $\mathbb F_2^n$.
\end{lemma}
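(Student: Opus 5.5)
The plan is to argue by contradiction, translating addition in $\mathbb F_2^n$ into symmetric difference of sets. Under the identification of $\mathcal P_n$ with $\mathbb F_2^n$, the sum $x+y$ of two vectors is the indicator vector of the symmetric difference $X\triangle Y$ of the corresponding sets $X,Y\subseteq[n]$. So suppose $X,Y\in\A$ and also $X\triangle Y\in\A$; we will show that these three members of $\A$ have empty common intersection, contradicting the $3$-wise intersecting hypothesis.

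The key step is the set identity $X\cap Y\cap (X\triangle Y)=\emptyset$. It holds because every element of $X\triangle Y$ lies in exactly one of $X,Y$, and so does not lie in $X\cap Y$. Applying the $3$-wise intersecting property to the (not necessarily distinct) triple $X,Y,X\triangle Y$ then gives a contradiction.

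The only point needing care is whether ``any $r$ of the sets'' allows repetitions, for instance when $X=Y$. In that case $X+X=0$ corresponds to $\emptyset$, and $\emptyset\notin\A$ because even a single member must satisfy $X\cap X\cap X\neq\emptyset$. Alternatively, if the sets are required to be distinct, the case $X=Y$ gives $X\triangle Y=\emptyset$, and $\emptyset$ lies in no intersecting family with at least one other member, so it cannot be in $\A$. Either way the degenerate cases are harmless: $\emptyset\in\A$ would already violate the intersecting hypothesis. When $X\neq Y$, the three sets $X$, $Y$, $X\triangle Y$ are automatically distinct, since $X\triangle Y=X$ would force $Y=\emptyset$. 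I expect no real obstacle here; the argument is a one-line set identity plus this bookkeeping.
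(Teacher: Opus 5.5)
Your argument is correct and is the paper's proof in set language: the paper checks coordinatewise that $x$, $y$, $x+y$ never share a $1$, which is exactly your identity $X\cap Y\cap(X\triangle Y)=\emptyset$, applied with repetitions allowed (Lemma~\ref{lem:cubic} needs that reading anyway, since it uses $f(x)f(y)f(x+y)=0$ for all $x,y$, including $x=y$). One small correction to your aside: under a distinct-sets convention the degenerate case is not harmless, since e.g.\ $\{\emptyset,[n]\}$ is then vacuously $3$-wise intersecting but not sum-free, so the lemma genuinely rests on the repetition-allowed reading you adopt first.
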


\begin{proof}
For any $x,y\in\mathbb F_2^n$, the three sets corresponding to $x$, $y$ and $x+y$ have empty common intersection. Indeed, if $x_i=y_i=1$, then $(x+y)_i=0$, while otherwise at least one of $x_i,y_i$ is zero. Thus $x,y,x+y$ cannot all belong to a 3-wise intersecting family.
\end{proof}

\begin{proof}[Proof of Theorem~\ref{thm:fourier}]
Let $f=\mathbbm{1}_{\A}$ and set $\alpha=\E[f]=\frac{|\A|}{2^n}$. Since $\A$ is intersecting, $0<\alpha\le1/2$.

By Lemmas~\ref{lem:threewise-sumfree} and~\ref{lem:cubic},
$$
\alpha^3=-\sum_{S\ne\varnothing}\hat f(S)^3.
$$
Also, Lemmas~\ref{lem:regular-influences} and \ref{lem:coefficient-influence} give
$$
|\hat{f}(S)|\le\frac{{\rm I}[f]}{2n}
$$
for every nonempty $S\subseteq[n]$. It follows from Parseval's identity that
$$
\alpha^3\le \sum_{S\ne\varnothing}|\hat{f}(S)|^3\le\frac{{\rm I}[f]}{2n}
\sum_{S\ne\varnothing}\hat{f}(S)^2=\frac{{\rm I}[f]}{2n}\alpha(1-\alpha).
$$
Thus
\begin{equation}\label{eq:influence-lower}
{\rm I}[f]\ge\frac{2n\alpha^2}{1-\alpha}
\ge 2n\alpha^2.
\end{equation}

On the other hand, Lemmas~\ref{lem:regular-influences} and \ref{lem:coefficient-influence}, followed by Parseval, give
$$
\frac{{\rm I}[f]^2}{4n}=
\sum_{i=1}^n\hat{f}(\{i\})^2\le
\sum_{S\ne\varnothing}\hat{f}(S)^2=
\alpha(1-\alpha).
$$
Thus
\begin{equation}\label{eq:influence-upper}
{\rm I}[f]\le 2\sqrt{n\alpha(1-\alpha)}.
\end{equation}
Combining \eqref{eq:influence-lower} and \eqref{eq:influence-upper}, we obtain $\alpha\le \frac1{1+n^{1/3}}$.
\end{proof}

\section{The entropy refinement}\label{sec:entropy}

\begin{proof}[Proof of Theorem~\ref{thm:main}]
Let $f=\mathbbm{1}_{\A}$ and set $\alpha=\E[f]=\frac{|\A|}{2^n}$. Let $X$ be uniformly distributed on $\{0,1\}^n$. Since $\A$ is regular, there is a number $\theta\in[0,1]$ such that, for every $i\in[n]$,
$$
\theta=\Pr(X_i=1\mid X\in\A).
$$
Equivalently, if $Z$ is chosen uniformly from $\A$, then
$\Pr(i\in Z)=\theta$ for every $i\in[n]$. Note that
\begin{equation}
    \begin{split}
   \hat{f}(\{i\})&=E\left[f(X)\chi_{\{i\}}(X)\right]=\E[f(X)(1-2X_i)]=\E[f(X)]-2\E[f(X)X_i]\\
   &=\alpha-2\Pr(X\in\A,\ X_i=1)=\alpha-2\Pr(X\in\A)\Pr(X_i=1\mid X\in\A)=\alpha(1-2\theta).
    \end{split}
\end{equation}
By Lemmas~\ref{lem:regular-influences} and \ref{lem:coefficient-influence},
$$
{\rm I}[f]=-2n\hat{f}(\{i\})=2n\alpha(2\theta-1).
$$
Combining this with~\eqref{eq:influence-lower}, we obtain
\begin{equation}\label{eq:marginal-bias}
2\theta-1\ge\frac{\alpha}{1-\alpha}.
\end{equation}

By Lemma~\ref{lem:entropy-regular},
$$
n\log2+\log\alpha=\log|\A|\le nh(\theta).
$$
Applying Lemma~\ref{lem:pinsker} and then \eqref{eq:marginal-bias}, we find that
$$
\log\frac1\alpha\ge2n\left(\theta-\frac12\right)^2\ge\frac n2\left(\frac{\alpha}{1-\alpha}\right)^2.
$$
This is exactly
$$
\log\frac{2^n}{|\A|}\ge \frac n2\left(\frac{|\A|}{2^n-|\A|}\right)^2.
$$

For the explicit bound, we weaken the preceding estimate to $\log(\frac{1}{\alpha})\ge \frac{n\alpha^2}{2}$ and set $y=n\alpha^2$. Since $\log(\frac{1}{\alpha})=\frac12\log\frac{n}{y}$, we have $\log\frac{n}{y}\ge y$, or equivalently $ye^y\le n$. The principal Lambert function is defined by $\W(x)e^{\W(x)}=x$ for $x\ge0$. For the Lambert function and its basic properties, see~\cite{C1996}. Its monotonicity therefore gives $y\le\W(n)$, and hence
$$
|\A|=2^n\alpha\le2^n\sqrt{\frac{\W(n)}n}.
$$
\end{proof}

\section{Concluding remarks}\label{sec:remark}

The estimates above are likely to be far from best possible. Ellis and Narayanan~\cite{EN2017} conjectured that every symmetric 3-wise intersecting family $\A\subseteq\mathcal P_n$ satisfies
$$
\log_2|\A|\le n-cn^\delta
$$
for some universal constants $c,\delta>0$, and Frankston, Kahn and Narayanan~\cite{FKN2018} asked for the same conclusion for regular increasing families. 

Let us indicate where the present argument loses information. With the notation used in the proofs, its two key conclusions are
$$
{\rm I}[f]\ge\frac{2n\alpha^2}{1-\alpha}
\qquad\text{and}\qquad
2\theta-1\ge
\frac{\alpha}{1-\alpha}.
$$
Subadditivity of entropy then gives
$$
\log\frac1\alpha\ge n\bigl(\log2-h(\theta)\bigr).
$$
Even if one keeps the exact entropy function, these inequalities yield only
$$
\log\frac1\alpha\ge n\left(
\log2-h\left(\frac1{2(1-\alpha)}\right)\right)=
\frac{n\alpha^2}{2}+O(n\alpha^3)
$$
as $\alpha\to0$. Thus the method naturally stops at the scale $\alpha\asymp\sqrt{\frac{\log n}{n}}$.

The main loss occurs when the cubic identity is estimated by
$$
\alpha^3\le\left(\max_{S\ne\varnothing}|\hat{f}(S)|\right)\sum_{S\ne\varnothing}\hat{f}(S)^2\le\frac{{\rm I}[f]}{2n}\alpha(1-\alpha).
$$
This step discards the signs of the Fourier coefficients and the way in which the Fourier mass is distributed across the different levels. Regularity and monotonicity make the coordinate influences equal, but it gives no higher-level information in the form used here. The entropy argument then sees only the common one-coordinate marginal and charges its deviation from $1/2$ quadratically.

\noindent\textbf{Acknowledgments.}
After the main mathematical results of this note had been obtained, ChatGPT 5.6 pointed out that the principal branch of the Lambert $W$-function could be used to express the bound in Theorem~\ref{thm:main} in a more explicit and comparable form. We also used ChatGPT 5.6 to assist with polishing the exposition. All other mathematical content in this paper is due to the authors.

\bibliographystyle{abbrv}
\bibliography{reference}
\end{document}